\pdfoutput=1
\documentclass[a4paper,11pt]{article}

\usepackage[T1]{fontenc}
\usepackage[utf8]{inputenc}
\usepackage[margin=1in]{geometry}
\usepackage{lmodern}
\usepackage{amsmath,amsthm,amssymb,amsfonts}
\usepackage{enumitem}
\usepackage{booktabs}
\usepackage{float}
\usepackage{cite}
\usepackage{microtype}
\usepackage[hidelinks]{hyperref}

\hypersetup{
  pdftitle={Pairwise Separation and Compactness in Soft Bitopological Spaces via Soft Elements},
  pdfauthor={Subhasis Ray and Titu Barik},
  pdfsubject={General Topology},
  pdfkeywords={soft set, soft element, soft topology, bitopological space, separation axiom, soft compactness}
}

\numberwithin{equation}{section}
\theoremstyle{definition}
\newtheorem{lemma}{Lemma}[section]
\newtheorem{proposition}[lemma]{Proposition}
\newtheorem{example}[lemma]{Example}
\newtheorem{theorem}[lemma]{Theorem}
\newtheorem{corollary}[lemma]{Corollary}
\newtheorem{definition}[lemma]{Definition}
\newtheorem{remark}[lemma]{Remark}

\newcommand{\Pow}{\mathcal{P}}
\newcommand{\SE}{\operatorname{SE}}
\newcommand{\softin}{\mathrel{\in_s}}
\newcommand{\softnotin}{\mathrel{\notin_s}}
\newcommand{\softsub}{\mathrel{\subseteq_s}}
\newcommand{\softcup}{\mathbin{\cup_s}}
\newcommand{\softcap}{\mathbin{\cap_s}}
\newcommand{\Can}{\mathsf{Can}}
\newcommand{\Cyl}{\operatorname{Cyl}}

\title{Pairwise Separation and Compactness in Soft Bitopological Spaces via Soft Elements}

\author{
Subhasis Ray\thanks{Corresponding author. Department of Mathematics, Visva-Bharati University, Santiniketan, West Bengal, India. Email: \href{mailto:subhasis.ray@visva-bharati.ac.in}{\nolinkurl{subhasis.ray@visva-bharati.ac.in}}. ORCID: \href{https://orcid.org/0000-0003-1100-4632}{0000-0003-1100-4632}.}
\and
Titu Barik\thanks{Ph.D. Scholar, Department of Mathematics, Visva-Bharati University, Santiniketan, West Bengal, India. Email: \href{mailto:titubarik96@gmail.com}{\nolinkurl{titubarik96@gmail.com}}. ORCID: \href{https://orcid.org/0009-0003-4317-9979}{0009-0003-4317-9979}.}
}

\date{}

\begin{document}
\maketitle

\begin{abstract}
Let $F\colon A\to\mathcal P(X)$ be a soft set, and let $\SE(F)$ be
the set of all maps $a\colon A\to X$ such that
$a(t)\in F(t)$, for every $t\in A$. If $\tau$ is a soft topology on
$F$, the sets $\SE(H)$, where $H\in\tau$, form a basis for an ordinary
topology $\tau^{\mathrm e}$ on $\SE(F)$, called the \emph{induced
soft-element topology}. Hence, two soft topologies on $F$ induce a
bitopological space on $\SE(F)$. We define pairwise soft $T_0$,
$T_1$, and $T_2$ axioms using soft elements and prove that they are
equivalent to the corresponding pairwise axioms of the induced
soft-element bitopological space. For canonical soft bitopologies,
these properties are also equivalent to the same axioms in every
component bitopological space. We also study pairwise soft
compactness. For canonical soft bitopological spaces, pairwise soft
compactness implies pairwise compactness of every component space,
and the converse holds when the parameter set is finite. An example
with infinitely many parameters shows that the finiteness condition
is needed.
\end{abstract}

\noindent\textbf{Keywords:} soft set; soft element; soft topology; bitopological space; separation axiom; soft compactness

\medskip
\noindent\textbf{Mathematics Subject Classification (2020):} 54A05, 54D10, 54D15, 03E72

\bigskip

% ====================================================================
\section{Introduction}

Molodtsov \cite{Molodtsov1999} introduced a soft set as a parameterized family of subsets of a universal set. Maji et al. \cite{Maji2003} developed the early theory, while Pei and Miao \cite{PeiMiao2005}, Ali et al. \cite{Ali2009}, and \c{C}a\u{g}man and Engino\u{g}lu \cite{CagmanEnginoglu2010} examined or modified the basic operations. Since several conventions occur in the literature, the present paper fixes one parameter set and uses pointwise soft inclusion, union, and intersection throughout.

Two forms of soft topology appeared in 2011.
In the approach of \c{C}a\u{g}man et al. \cite{Cagman2011},
the soft open sets are soft subsets of a fixed soft set $F$, and
the sets $F(t)$ may vary with $t$.
Shabir and Naz \cite{ShabirNaz2011} used the absolute soft set
$\widetilde{X}$, where
\(
\widetilde{X}(t)=X, \; \text{for every } t\in A.
\)
Thus, the same set $X$ is used at every parameter.
We use the fixed-soft-set approach. Further studies of separation
and compactness in soft topological spaces include
\cite{Zorlutuna2012,Peyghan2014,HussainAhmad2015,
BayramovAras2018}.

Ray and Goldar \cite{RayGoldar2017} studied soft sets through their
soft elements. A soft element of $F$ is a function $a\colon A\to X$
such that
\(
a(t)\in F(t), \; \text{for every } t\in A.
\)
Thus
\[
\SE(F)=\prod_{t\in A}F(t).
\]
Chiney and Samanta \cite{ChineySamanta2019} gave a new definition
of soft topology using elementary union and elementary intersection.
Goldar and Ray \cite{GoldarRay2019} used soft elements to study
separation axioms and compactness. Nazmul and Samanta
\cite{NazmulSamanta2013} studied neighborhood properties in soft
topological spaces. Related product and soft-element constructions
appear in Matejdes \cite{Matejdes2016,Matejdes2021},
Ta\c{s}k\"opr\"u and Alt\i nta\c{s} \cite{TaskopruAltintas2021},
and Alt\i nta\c{s} et al. \cite{AltintasTaskopruSelvi2021}.
Azzam et al. \cite{Azzam2022} also reconsidered the construction
of soft topologies and their operators. These papers provide the
background for treating $\SE(F)$ as an ordinary set of points.

Kelly \cite{Kelly1963} introduced bitopological spaces and the
pairwise Hausdorff axiom. Fletcher et al.
\cite{FletcherHoylePatty1969} gave the pairwise $T_0$ condition
and defined pairwise compactness using pairwise open covers.
Reilly \cite{Reilly1972} later studied pairwise $T_0$ and $T_1$
separation properties, while Cooke and Reilly
\cite{CookeReilly1975} further studied bitopological compactness.

Soft bitopological spaces were studied by \c{S}enel and \c{C}a\u{g}man
\cite{SenelCagman2014}, Ittanagi \cite{Ittanagi2014}, Naz et al.
\cite{NazShabirAli2015}, and Acharjee and Tripathy
\cite{AcharjeeTripathy2017}. Nordo \cite{Nordo2022} considered spaces
with several soft topologies, while Askandar et al. \cite{Askandar2022}
studied further classes of open sets in soft bitopological spaces.

Most of these works define bitopological notions directly in terms
of soft sets or soft points. To our knowledge, they do not study how
two soft topologies on a fixed soft set $F$ induce an ordinary
bitopology on $\SE(F)$, whether pairwise soft $T_0$, $T_1$, and $T_2$
agree with the corresponding separation axioms of the induced space,
or how pairwise soft compactness is related to compactness of the
component spaces.

First we define the topology on $\SE(F)$. For a soft open set $H$,
let $\SE(H)$ be the set of soft elements belonging to $H$. The family
\(
\{\SE(H):H\in\tau\}
\)
need not be closed under unions, but it covers $\SE(F)$ and is closed
under finite intersections. It is therefore a basis for a topology
$\tau^{\mathrm e}$ on $\SE(F)$, called the \emph{induced
soft-element topology}. Two soft topologies $\tau_1$ and $\tau_2$
on $F$ then give the ordinary bitopological space
\(
\bigl(\SE(F),\tau_1^{\mathrm e},\tau_2^{\mathrm e}\bigr).
\)

We prove that pairwise soft $T_j$ is equivalent to pairwise $T_j$
in the induced soft-element bitopological space, for $j=0,1,2$.
When both soft topologies are canonical, these conditions are also
equivalent to the corresponding pairwise axioms in every component
space. In the same canonical setting, pairwise soft compactness
implies pairwise compactness of every component space. If the
parameter set is finite, the converse holds even when the soft
topologies are not canonical. An example shows that the converse
may fail when the parameter set is infinite.

Section~\ref{sec:prelim} fixes the definitions and notation. Section~\ref{sec:element-topology} constructs the induced soft-element topology. Sections~\ref{sec:separation} and~\ref{sec:compactness} deal with pairwise separation and compactness, respectively. The last section gives a brief conclusion.

% ====================================================================
\section{Preliminaries and notation}\label{sec:prelim}

\subsection{Soft sets and soft elements}

Throughout the paper, $X$ is a nonempty universal set, $A$ is a nonempty parameter set, and
\[
F\colon A\longrightarrow \Pow(X)
\]
is a fixed soft set. We assume that $F(t)\neq\varnothing$, for every $t\in A$, and that $\SE(F)\neq\varnothing$. The second assumption is stated separately because, for an arbitrary infinite parameter set, it may require a choice principle.

\begin{definition}[Soft set and basic operations \cite{Molodtsov1999,PeiMiao2005}]\label{def:soft-set}
A \emph{soft set over $X$ with parameter set $A$} is a map $H\colon A\to\Pow(X)$. For soft sets $H$ and $K$ with the same parameter set, write
\[
H\softsub K
\quad\Longleftrightarrow\quad
H(t)\subseteq K(t),\quad\text{for every }t\in A.
\]
The \emph{null soft set} $\Phi$ is given by $\Phi(t)=\varnothing$, for every $t\in A$. The \emph{soft union} and \emph{soft intersection} of $H$ and $K$ are defined by
\[
(H\softcup K)(t)=H(t)\cup K(t),
\qquad
(H\softcap K)(t)=H(t)\cap K(t),
\qquad\text{for every }t\in A.
\]
Arbitrary soft unions are defined in the same pointwise way.
\end{definition}

The operations in Definition~\ref{def:soft-set} are the ones used in every later statement. In particular, all soft sets in the paper have the fixed parameter set $A$.

\begin{definition}[Soft element \cite{RayGoldar2017}]\label{def:soft-element}
A \emph{soft element} of $F$ is a map $a\colon A\to X$ such that
\[
a(t)\in F(t),\qquad\text{for every }t\in A.
\]
The set of all soft elements of $F$ is denoted by $\SE(F)$.

For a soft subset $H\softsub F$, define
\[
\SE(H)=\{a\in\SE(F):a(t)\in H(t),\text{ for every }t\in A\}.
\]
We write $a\softin H$ when $a\in\SE(H)$, and $a\softnotin H$ when $a\notin\SE(H)$.
\end{definition}

\begin{example}\label{ex:soft-elements}
Let $A=\{p,q\}$, $F(p)=\{x,y\}$, and $F(q)=\{u,v\}$. A soft element is a function on $A$. Its graph has the form
\[
\{(p,a(p)),(q,a(q))\}.
\]
After fixing the order $(p,q)$, we abbreviate this function by the ordered pair $(a(p),a(q))$. Thus,
\[
\SE(F)=\{(x,u),(x,v),(y,u),(y,v)\}.
\]
If $H(p)=\{x\}$ and $H(q)=\{v\}$, then $(x,v)\softin H$, while the other three soft elements do not belong to $H$.
\end{example}

\begin{remark}\label{rem:soft-points}
A soft element in Definition~\ref{def:soft-element} chooses a value at every parameter. It is different from a soft point, which is supported at one selected parameter; see Zorlutuna et al. \cite{Zorlutuna2012}. Soft elements are suited to the product $\prod_{t\in A}F(t)$ and will be used throughout this paper.
\end{remark}

\subsection{Soft topologies and canonical soft topologies}

\begin{definition}[Soft topology \cite{Cagman2011}]\label{def:soft-topology}
A family $\tau$ of soft subsets of $F$ is a \emph{soft topology on $F$} if
\begin{enumerate}[label=(\roman*)]
\item $\Phi,F\in\tau$;
\item the soft union of any family of members of $\tau$ belongs to $\tau$;
\item the soft intersection of any finite family of members of $\tau$ belongs to $\tau$.
\end{enumerate}
The pair $(F,\tau)$ is a soft topological space, and the members of $\tau$ are soft open sets.
\end{definition}

This is the fixed-soft-set approach of \c{C}a\u{g}man et al. \cite{Cagman2011}. The Shabir--Naz setting \cite{ShabirNaz2011} is obtained by taking $F(t)=X$, for every $t\in A$.

For $t\in A$, put
\begin{equation}\label{eq:component-topology}
\tau_t=\{H(t):H\in\tau\}.
\end{equation}

\begin{proposition}\label{prop:component-topology}
The family $\tau_t$ is a topology on $F(t)$, for every $t\in A$.  We call $\tau_t$ the
\emph{component topology of $\tau$ at $t$}.
\end{proposition}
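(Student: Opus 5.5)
We verify the three axioms of a topology on $F(t)$ directly, by transporting each axiom of the soft topology $\tau$ through the evaluation map $H\mapsto H(t)$. Since every $H\in\tau$ is a soft subset of $F$, we have $H(t)\subseteq F(t)$. Hence $\tau_t$ is a family of subsets of $F(t)$, and it remains to check the axioms.

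First, since $\Phi,F\in\tau$ by Definition~\ref{def:soft-topology}(i), both $\Phi(t)=\varnothing$ and $F(t)$ belong to $\tau_t$.

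Second, let $\{U_i:i\in I\}\subseteq\tau_t$ be an arbitrary family. For each $i$, choose some $H_i\in\tau$ with $H_i(t)=U_i$. This uses the axiom of choice when $I$ is infinite, which is harmless in the present setting; alternatively, one can take the soft union of all $H\in\tau$ with $H(t)=U_i$, which avoids choice altogether. Put $H=\bigcup_s\{H_i:i\in I\}$, which lies in $\tau$ by Definition~\ref{def:soft-topology}(ii). Because soft union is computed pointwise (Definition~\ref{def:soft-set}), we get $H(t)=\bigcup_i H_i(t)=\bigcup_i U_i$, so this union lies in $\tau_t$. The empty family is already covered by $\varnothing\in\tau_t$.

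Third, for finite intersections it suffices to treat two sets. Given $U=H(t)$ and $V=K(t)$ with $H,K\in\tau$, the soft intersection $H\softcap K$ belongs to $\tau$ by Definition~\ref{def:soft-topology}(iii). Since intersection is also pointwise, $(H\softcap K)(t)=U\cap V\in\tau_t$.

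None of these steps is a genuine obstacle. The only point requiring care is the choice of representatives $H_i$ in the union step, and the soft-union-of-all-representatives trick resolves it without choice. The essential ingredient throughout is that all soft operations are defined pointwise in $t$.
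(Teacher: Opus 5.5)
Your proof is correct and follows the paper's approach: both check the three axioms by evaluating the soft-topology axioms at $t$, using that soft union and intersection are pointwise. For arbitrary unions, the paper uses your choice-free variant as its main argument: it takes the soft union of all $H\in\tau$ with $H(t)=U_i$ for some $i\in I$.
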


\begin{proof}
Since $\Phi,F\in\tau$, we have $\varnothing=\Phi(t)\in\tau_t$ and $F(t)\in\tau_t$. Let $\{U_i:i\in I\}\subseteq\tau_t$ and set
\[
\mathcal H=\{H\in\tau:H(t)=U_i\text{ for some }i\in I\}.
\]
Let $G$ be the soft union of the family $\mathcal H$. Then, $G\in\tau$, and
\[
G(t)=\bigcup_{H\in\mathcal H}H(t)=\bigcup_{i\in I}U_i.
\]
Thus, arbitrary unions of members of $\tau_t$ belong to $\tau_t$ without choosing one representative for every index. For $U_1,\ldots,U_n\in\tau_t$, choose $H_k\in\tau$ with $H_k(t)=U_k$, for $k=1,\ldots,n$, and take their soft intersection. Hence, $\tau_t$ is closed under finite intersections and is a topology on $F(t)$.
\end{proof}

Ta\c{s}k\"opr\"u and Alt\i nta\c{s}
\cite{TaskopruAltintas2021} gave a related coordinate construction
for an $\varepsilon$-soft topology on the absolute soft set, using
elementary soft operations. Here we work on a fixed soft set $F$
and use the usual pointwise soft operations. We call the following
construction a \emph{canonical soft topology}.

\begin{definition}[Canonical soft topology]\label{def:canonical}
Let $\sigma_t$ be a topology on $F(t)$, for every $t\in A$. Define
\[
\Can(\{\sigma_t\}_{t\in A})
 =\{H\softsub F:H(t)\in\sigma_t,\text{ for every }t\in A\}.
\]
A soft topology $\tau$ is called \emph{canonical} if
$\tau=\Can(\{\tau_t\}_{t\in A}).$
\end{definition}

\begin{proposition}\label{prop:canonical-topology}
The family $\Can(\{\sigma_t\}_{t\in A})$ is a soft topology on $F$, and its component topology at $t$ is $\sigma_t$.
\end{proposition}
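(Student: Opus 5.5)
The plan is to check the three axioms of Definition~\ref{def:soft-topology} directly for $\Can(\{\sigma_t\}_{t\in A})$, coordinate by coordinate, and then compute the component topology at each $t$. Since soft union, soft intersection and soft inclusion are all defined pointwise (Definition~\ref{def:soft-set}), every axiom reduces to the corresponding axiom for the ordinary topologies $\sigma_t$ on $F(t)$.

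\textbf{The axioms.} First, $\Phi\softsub F$ and $\Phi(t)=\varnothing\in\sigma_t$ for every $t$, so $\Phi\in\Can(\{\sigma_t\}_{t\in A})$. Likewise $F(t)\in\sigma_t$ because $\sigma_t$ is a topology on $F(t)$, so $F$ belongs as well.

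Next, let $\{H_i:i\in I\}$ be any subfamily and let $G$ be its soft union. Then $G(t)=\bigcup_i H_i(t)$. This set lies in $F(t)$ and is a union of members of $\sigma_t$, so it belongs to $\sigma_t$. Hence $G\softsub F$ and $G\in\Can(\{\sigma_t\}_{t\in A})$. For the empty family, $G=\Phi$, which was already handled.

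For a finite subfamily $H_1,\ldots,H_n$, the soft intersection has $t$-th coordinate $H_1(t)\cap\cdots\cap H_n(t)\in\sigma_t$. So the family is closed under finite soft intersections, and it is a soft topology on $F$.

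\textbf{The component topology.} Write $\tau=\Can(\{\sigma_t\}_{t\in A})$ and fix $t_0\in A$. The inclusion $\tau_{t_0}\subseteq\sigma_{t_0}$ is immediate from the definition, since every $H\in\tau$ has $H(t_0)\in\sigma_{t_0}$.

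For the reverse inclusion, take $U\in\sigma_{t_0}$ and define the soft set $H$ by $H(t_0)=U$ and $H(t)=F(t)$ for $t\neq t_0$. Every coordinate is open in its topology: $U$ is open in $\sigma_{t_0}$, and $F(t)$ is open in $\sigma_t$ for $t\ne t_0$. Also $H\softsub F$. Therefore $H\in\tau$ and $U=H(t_0)\in\tau_{t_0}$. This gives $\tau_{t_0}=\sigma_{t_0}$.

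The only step needing any care is this reverse inclusion, where we must produce an explicit soft open set with a prescribed coordinate. Choosing $F(t)$ at all other parameters handles it without any choice principle, and the rest of the argument is routine.
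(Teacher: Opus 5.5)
Your proof is correct and follows essentially the same route as the paper: you check the three axioms coordinatewise, using that soft union and intersection are pointwise, and you get the reverse inclusion $\sigma_{t}\subseteq\tau_{t}$ from the soft set equal to $U$ at $t$ and to $F(s)$ elsewhere (the paper's $\Cyl_{t}(U)$). Your explicit remarks on the empty union and on $H\softsub F$ are harmless extra detail.
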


\begin{proof}
The null soft set and $F$ belong to the family because $\varnothing,F(t)\in\sigma_t$, for every $t\in A$. If $\{H_i:i\in I\}$ is a family of its members, let $H$ be its soft union. Then, $H(t)=\bigcup_{i\in I}H_i(t)\in\sigma_t$, for every $t\in A$, so $H$ belongs to the family. Finite soft intersections are treated in the same way. Hence, the family is a soft topology.

Its component topology is contained in $\sigma_t$ by definition. Conversely, if $U\in\sigma_t$, define $H(t)=U$ and $H(s)=F(s)$ when $s\neq t$. Then, $H\in\Can(\{\sigma_s\}_{s\in A})$ and $H(t)=U$. Thus, the component topology is exactly $\sigma_t$.
\end{proof}

\subsection{Classical bitopological notions}

Kelly \cite{Kelly1963} defined a bitopological space as a set with
two topologies. The pairwise $T_0$ condition below is due to
Fletcher et al. \cite{FletcherHoylePatty1969}. We use the pairwise
$T_1$ condition of Reilly \cite{Reilly1972}. Kelly called the
pairwise $T_2$ condition \emph{pairwise Hausdorff}.

\begin{definition}[Bitopological separation axioms]
\label{def:classical-pairwise}
A \emph{bitopological space} is a triple
$(Y,\sigma_1,\sigma_2)$, where $\sigma_1$ and $\sigma_2$ are
topologies on $Y$. For every two distinct points $x,y\in Y$:
\begin{enumerate}[label=(\roman*)]
\item the space is \emph{pairwise $T_0$} if either there is
      $U\in\sigma_1$ such that
      \[
      x\in U \quad\text{and}\quad y\notin U,
      \]
      or there is $V\in\sigma_2$ such that
      \[
      y\in V \quad\text{and}\quad x\notin V;
      \]

\item the space is \emph{pairwise $T_1$} if there are
      $U\in\sigma_1$ and $V\in\sigma_2$ such that
      \[
      x\in U,\qquad y\notin U,\qquad
      y\in V,\qquad x\notin V;
      \]

\item the space is \emph{pairwise $T_2$} if there are
      $U\in\sigma_1$ and $V\in\sigma_2$ such that
      \[
      x\in U,\qquad y\in V,\qquad U\cap V=\varnothing.
      \]
\end{enumerate}
\end{definition}

Fletcher et al. \cite{FletcherHoylePatty1969} call a cover
$\mathcal V$ of $Y$ pairwise open if
\[
\mathcal V\subseteq\sigma_1\cup\sigma_2
\]
and both $\mathcal V\cap\sigma_1$ and
$\mathcal V\cap\sigma_2$ contain a nonempty member. They call
$Y$ pairwise compact if every such cover has a finite subcover.
Reilly \cite{ReillyLocal1972} used the same convention in his
study of bitopological local compactness.

In this paper, we use a slightly different convention. We allow
covers of a subset $B\subseteq Y$, and we do not require the cover
to contain a nonempty member from both topologies.

\begin{definition}[Pairwise compactness]
\label{def:classical-compact}
Let $(Y,\sigma_1,\sigma_2)$ be a bitopological space and let
$B\subseteq Y$. A \emph{pairwise open cover} of $B$, in this paper,
is a family
\[
\mathcal V\subseteq\sigma_1\cup\sigma_2
\]
such that
\[
B\subseteq\bigcup_{V\in\mathcal V}V.
\]
The set $B$ is \emph{pairwise compact} if every pairwise open
cover of $B$ has a finite subfamily that covers $B$.
\end{definition}

Thus, a pairwise open cover in this paper may consist only of sets
from one of the two topologies.
\begin{table}[H]
\centering
\caption{Notations used in the paper.}\label{tab:notation}
\begin{tabular}{@{}p{3.8cm}p{11.4cm}@{}}
\toprule
Symbol & Meaning \\
\midrule
$A$, $X$ & parameter set and universal set \\
$F\colon A\to\Pow(X)$ & fixed soft set with $F(t)\neq\varnothing$, for every $t\in A$ \\
$H\softsub F$ & $H(t)\subseteq F(t)$, for every $t\in A$ \\
$\Phi$ & null soft set, $\Phi(t)=\varnothing$, for every $t\in A$ \\
$\softcup$, $\softcap$ & soft union and soft intersection \\
$\SE(F)$ & set of soft elements of $F$ \\
$a\softin H$ & $a(t)\in H(t)$, for every $t\in A$ \\
$\SE(H)$ & set of soft elements of $H$, viewed as a subset of $\SE(F)$ \\
$\tau_t$ & component topology defined in \eqref{eq:component-topology} \\
$\tau^{\mathrm e}$ & induced soft-element topology defined in Theorem~\ref{thm:e-topology} \\
$\Cyl_{t_0}(U)$
&
the soft set equal to $U$ at $t_0$ and to $F(t)$ for $t\neq t_0$
\\
\bottomrule
\end{tabular}
\end{table}

% ====================================================================
\section{The induced soft-element topology}\label{sec:element-topology}

\subsection{Basic sets}

For a soft subset $H\softsub F$,
\[
\SE(H)=\prod_{t\in A}H(t)
\]
as a subset of $\SE(F)=\prod_{t\in A}F(t)$. The set $\SE(H)$ may be empty.

\begin{lemma}\label{lem:se-identities}
For soft subsets $H,K\softsub F$,
\begin{align*}
\SE(F)&=\prod_{t\in A}F(t), & \SE(\Phi)&=\varnothing,\\
\SE(H\softcap K)&=\SE(H)\cap\SE(K), &
\SE(H)\cup\SE(K)&\subseteq\SE(H\softcup K).
\end{align*}
\end{lemma}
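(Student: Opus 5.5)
The plan is to unfold Definition~\ref{def:soft-element} for each of the four identities. In each case the claim reduces to a statement about the coordinates $a(t)$ of a soft element $a\in\SE(F)$. No earlier results are needed beyond the definitions of soft union and soft intersection in Definition~\ref{def:soft-set}.

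For the first identity, a map $a\colon A\to X$ lies in $\SE(F)$ exactly when $a(t)\in F(t)$ for every $t\in A$. This is precisely the definition of an element of $\prod_{t\in A}F(t)$. Applying the definition of $\SE(H)$ with $H=F$ returns the same set, so the two descriptions of $\SE(F)$ agree.

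For $\SE(\Phi)$, suppose $a\softin\Phi$. Then $a(t)\in\Phi(t)=\varnothing$ for every $t\in A$. Since $A$ is nonempty, we can pick any $t\in A$ and obtain a contradiction. Hence $\SE(\Phi)=\varnothing$. This is the only place where the standing assumption $A\neq\varnothing$ is used, and it is the one point to watch: if $A$ were empty, the empty function would belong to $\SE(\Phi)$.

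For the intersection identity, we argue by a chain of equivalences. The statement $a\softin H\softcap K$ means $a(t)\in H(t)\cap K(t)$ for every $t$. This holds if and only if $a(t)\in H(t)$ for all $t$ and $a(t)\in K(t)$ for all $t$, which is the statement $a\in\SE(H)\cap\SE(K)$. The step uses only the fact that a universal quantifier distributes over a conjunction.

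For the union inclusion, let $a\in\SE(H)$. Then for every $t$ we have $a(t)\in H(t)\subseteq H(t)\cup K(t)=(H\softcup K)(t)$, so $a\softin H\softcup K$. The case $a\in\SE(K)$ is symmetric.

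It is also worth noting why only an inclusion is claimed in the last identity. A universal quantifier does not distribute over a disjunction. Example~\ref{ex:soft-elements} provides a witness. Take $H(p)=\{x\}$, $H(q)=\{u\}$, and $K(p)=\{y\}$, $K(q)=\{v\}$. Then $(x,v)\softin H\softcup K$, but $(x,v)$ lies in neither $\SE(H)$ nor $\SE(K)$. This shows the inclusion can be strict.
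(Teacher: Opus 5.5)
Your proof is correct and follows the paper's argument: you unfold Definition~\ref{def:soft-element} coordinatewise, use $A\neq\varnothing$ to get $\SE(\Phi)=\varnothing$, and check the intersection identity and union inclusion pointwise. Your chain of equivalences for the intersection matches the paper's two inclusions, and your strictness witness for the union plays the same role as the paper's Example~\ref{ex:strict-union}.
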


\begin{proof}
The first identity is Definition~\ref{def:soft-element}. Since $A$ is nonempty and $\Phi(t)=\varnothing$, for every $t\in A$, no soft element can belong to $\Phi$. Hence, $\SE(\Phi)=\varnothing$.

Let $a\in\SE(H\softcap K)$. Then,
\[
a(t)\in H(t)\cap K(t),\qquad\text{for every }t\in A,
\]
so $a\in\SE(H)$ and $a\in\SE(K)$. This proves
$\SE(H\softcap K)\subseteq\SE(H)\cap\SE(K)$. Conversely, let $a\in\SE(H)\cap\SE(K)$. Then, $a(t)\in H(t)$ and $a(t)\in K(t)$, for every $t\in A$. Hence, $a(t)\in(H\softcap K)(t)$, for every $t\in A$, and therefore $a\in\SE(H\softcap K)$. Thus,
\[
\SE(H\softcap K)=\SE(H)\cap\SE(K).
\]

Finally, let $a\in\SE(H)\cup\SE(K)$. If $a\in\SE(H)$, then $a(t)\in H(t)\subseteq H(t)\cup K(t)$, for every $t\in A$. The same conclusion holds when $a\in\SE(K)$. Therefore,
\[
a\in\SE(H\softcup K),
\]
and the stated inclusion follows.
\end{proof}

\begin{example}[The union inclusion can be strict]\label{ex:strict-union}
Let $A=\{1,2\}$, $F(1)=F(2)=\{0,1\}$, and define
\[
H_0(t)=\{0\},\qquad H_1(t)=\{1\}
\]
for both parameters. Then,
\[
\SE(H_0)\cup\SE(H_1)=\{(0,0),(1,1)\},
\]
whereas
\[
\SE(H_0\softcup H_1)=\SE(F)
 =\{(0,0),(0,1),(1,0),(1,1)\}.
\]
\end{example}

\begin{theorem}\label{thm:e-topology}
Let $(F,\tau)$ be a soft topological space. The family
\[
\mathcal B_{\tau}=\{\SE(H):H\in\tau\}
\]
is a basis for a topology on $\SE(F)$. We call the generated topology the \emph{induced soft-element topology} and denote it by $\tau^{\mathrm e}$. Thus,
\begin{equation}\label{eq:e-topology}
\tau^{\mathrm e}
 =\left\{\bigcup_{H\in\mathcal H}\SE(H):\mathcal H\subseteq\tau\right\}.
\end{equation}
\end{theorem}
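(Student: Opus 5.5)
We will verify the two standard criteria for a family of subsets of $\SE(F)$ to be a basis for a topology, and then identify the generated topology with the family of unions in \eqref{eq:e-topology}. The only results needed are Definition~\ref{def:soft-topology} and the identities in Lemma~\ref{lem:se-identities}.

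\emph{Covering.} Since $F\in\tau$ by Definition~\ref{def:soft-topology}(i), the set $\SE(F)$ itself belongs to $\mathcal B_\tau$. Hence every soft element $a\in\SE(F)$ lies in a member of $\mathcal B_\tau$. We do not need $\SE(F)\neq\varnothing$ here, although it is assumed in the paper.

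\emph{Intersection condition.} Let $\SE(H),\SE(K)\in\mathcal B_\tau$ with $H,K\in\tau$, and let $a\in\SE(H)\cap\SE(K)$. By Definition~\ref{def:soft-topology}(iii), $H\softcap K\in\tau$. By Lemma~\ref{lem:se-identities},
\[
\SE(H\softcap K)=\SE(H)\cap\SE(K).
\]
So the intersection of two basic sets is itself a basic set, which is stronger than the usual requirement of a basic set $W$ with $a\in W\subseteq\SE(H)\cap\SE(K)$. By the standard basis criterion, the collection of all unions of subfamilies of $\mathcal B_\tau$ is a topology on $\SE(F)$.

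\emph{The formula \eqref{eq:e-topology}.} The topology generated by a basis is exactly the set of unions of subfamilies of the basis. A subfamily of $\mathcal B_\tau$ has the form $\{\SE(H):H\in\mathcal H\}$ for some $\mathcal H\subseteq\tau$, and conversely every $\mathcal H\subseteq\tau$ gives such a subfamily. This yields \eqref{eq:e-topology}. The choice $\mathcal H=\varnothing$ gives $\varnothing$; alternatively, $\varnothing=\SE(\Phi)$ by Lemma~\ref{lem:se-identities}.

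\emph{The main subtlety.} Unions of basic sets need not be basic sets, because $\SE$ does not preserve soft unions (Example~\ref{ex:strict-union}). For this reason we must pass to generated unions rather than claim that $\mathcal B_\tau$ is itself a topology. Beyond this remark, the argument is routine.
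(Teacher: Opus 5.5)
Your proof is correct and follows the paper's argument: covering via $F\in\tau$, closure of $\mathcal B_\tau$ under pairwise intersections via $\SE(H\softcap K)=\SE(H)\cap\SE(K)$ from Lemma~\ref{lem:se-identities}, and then identifying the generated topology with the unions of subfamilies of the basis. Your extra remarks on the empty union and on why $\mathcal B_\tau$ itself need not be a topology agree with Remark~\ref{rem:basis-not-topology}.
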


\begin{proof}
Since $F\in\tau$, the set $\SE(F)$ belongs to $\mathcal B_{\tau}$; hence, the family covers $\SE(F)$. Let $B_1=\SE(H)$ and $B_2=\SE(K)$ be two members of $\mathcal B_{\tau}$. Lemma~\ref{lem:se-identities} gives
\[
B_1\cap B_2=\SE(H\softcap K).
\]
Since $H\softcap K\in\tau$, the intersection $B_1\cap B_2$ also belongs to $\mathcal B_{\tau}$. Therefore, $\mathcal B_{\tau}$ satisfies the basis axioms, and its arbitrary unions form the topology in \eqref{eq:e-topology}.
\end{proof}

\begin{remark}\label{rem:basis-not-topology}
The family $\mathcal B_{\tau}$ need not itself be a topology. Example~\ref{ex:strict-union} shows that
\[
\SE(H)\cup\SE(K)
\]
need not equal $\SE(H\softcup K)$. The arbitrary unions in \eqref{eq:e-topology} are therefore necessary.
\end{remark}

\begin{example}[Open coordinate images are not enough]\label{ex:projection-counterexample}
Let $A=\{1,2\}$, let $F(1)=F(2)=\{0,1\}$, and give both component sets the indiscrete topology. Put
\[
U=\{(0,0),(1,1)\},
\qquad
V=\{(0,0),(0,1),(1,0)\}.
\]
For both coordinates $t$, the images $\pi_t(U)$ and $\pi_t(V)$ equal $\{0,1\}$ and are open. However,
\[
U\cap V=\{(0,0)\},
\]
and both coordinate images of this intersection are singletons, which are not open. Thus, the subsets of $\SE(F)$ having open coordinate images do not generally form a topology. The basis in Theorem~\ref{thm:e-topology} avoids this problem.
\end{example}

\begin{example}[An induced open set need not be one basic set]\label{ex:not-one-basic-set}
Use the soft sets $H_0,H_1$ from Example~\ref{ex:strict-union} and let
\[
\tau=\{\Phi,H_0,H_1,F\}.
\]
Then,
\[
\SE(H_0)\cup\SE(H_1)=\{(0,0),(1,1)\}
\]
is open in $\tau^{\mathrm e}$. It is not equal to $\SE(H)$ for any soft set $H$. Indeed, both of its coordinate images are $\{0,1\}$. If it were $\SE(H)$, then $H(1)=H(2)=\{0,1\}$, which would give $\SE(H)=\SE(F)$.
\end{example}

\begin{proposition}\label{prop:canonical-product}
If $\tau=\Can(\{\sigma_t\}_{t\in A})$, then $\tau^{\mathrm e}$ is the box topology on
\[
\SE(F)=\prod_{t\in A}F(t)
\]
generated by the component topologies $\sigma_t$. If $A$ is finite, this is also the product topology.
\end{proposition}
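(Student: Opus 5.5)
The plan is to compare bases directly. The box topology on $\prod_{t\in A}F(t)$ generated by the $\sigma_t$ has as a basis the family
\[
\mathcal B_{\mathrm{box}}=\Bigl\{\prod_{t\in A}U_t : U_t\in\sigma_t\text{ for every }t\in A\Bigr\}.
\]
We will show that $\mathcal B_{\mathrm{box}}$ is \emph{equal} to the basis $\mathcal B_\tau=\{\SE(H):H\in\tau\}$ of Theorem~\ref{thm:e-topology}. Two topologies generated by the same basis coincide, since each consists exactly of the arbitrary unions of basic sets, as in \eqref{eq:e-topology}.

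For the inclusion $\mathcal B_\tau\subseteq\mathcal B_{\mathrm{box}}$, take $H\in\tau=\Can(\{\sigma_t\}_{t\in A})$. By Definition~\ref{def:canonical}, $H(t)\in\sigma_t$ for every $t\in A$. Hence
\[
\SE(H)=\prod_{t\in A}H(t)
\]
is a box basic set; this uses the product description of $\SE(H)$ stated at the start of this subsection. For the reverse inclusion, start from a family $U_t\in\sigma_t$ with $t\in A$, and define the soft set $H$ by $H(t)=U_t$. Then $H\softsub F$ and $H\in\Can(\{\sigma_t\}_{t\in A})$ by definition, and $\SE(H)=\prod_{t\in A}U_t$. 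Note that an empty box (some $U_t=\varnothing$) corresponds to $\SE(H)=\varnothing$, and this is harmless. No choice principle is needed: the family $(U_t)$ is given, and the soft set is just that family. This gives $\mathcal B_\tau=\mathcal B_{\mathrm{box}}$, so $\tau^{\mathrm e}$ is the box topology.

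For finite $A$, we will invoke the standard fact that the product topology has as a basis the boxes $\prod_t U_t$ with $U_t=F(t)$ for all but finitely many $t$. When $A$ is finite, every box satisfies this condition vacuously, so the two bases coincide and so do the topologies.

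The only point needing care is making sure that "box topology generated by $\sigma_t$" means the topology on the product set $\prod_t F(t)$ whose basis is all open boxes. Beyond that, the argument is a direct unwinding of definitions, and no step should be a real obstacle.
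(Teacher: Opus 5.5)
Your proposal is correct and is essentially the paper's proof: you show that $\mathcal B_\tau$ equals the family of open boxes. For one inclusion you set $H(t)=U_t$, for the other you use $H(t)\in\sigma_t$, and you finish by noting that for finite $A$ every box is a basic set of the product topology. Your remarks on empty boxes and on not needing choice are harmless additions that the paper leaves implicit.
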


\begin{proof}
A basic open set in the box topology has the form $\prod_{t\in A}U_t$, where $U_t\in\sigma_t$, for every $t\in A$. Define a soft set $H$ by $H(t)=U_t$. Then, $H\in\tau$ and
\[
\SE(H)=\prod_{t\in A}U_t.
\]
Conversely, every $H\in\tau$ satisfies $H(t)\in\sigma_t$, for every $t\in A$, so $\SE(H)$ is a basic box. Hence, the two bases agree. For finite $A$, the box and product topologies coincide.
\end{proof}

\begin{proposition}\label{prop:canonical-enlargement}
For an arbitrary soft topology $\tau$, define
\[
\tau^{\mathrm{can}}=\Can(\{\tau_t\}_{t\in A}).
\]
Then, $\tau\subseteq\tau^{\mathrm{can}}$, the two soft topologies have the same component topologies, and
\[
\tau^{\mathrm e}\subseteq(\tau^{\mathrm{can}})^{\mathrm e}.
\]
\end{proposition}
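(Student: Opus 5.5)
The proof will check the three claims of Proposition~\ref{prop:canonical-enlargement} in turn, each directly from the definitions, using Proposition~\ref{prop:canonical-topology} for the component topologies and Theorem~\ref{thm:e-topology} for the description of the induced topologies.

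\emph{Inclusion $\tau\subseteq\tau^{\mathrm{can}}$.} Let $H\in\tau$. By the definition of $\tau_t$ in \eqref{eq:component-topology}, $H(t)\in\tau_t$ for every $t\in A$. Since $H\softsub F$, Definition~\ref{def:canonical} gives $H\in\Can(\{\tau_t\}_{t\in A})=\tau^{\mathrm{can}}$.

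\emph{Component topologies.} Proposition~\ref{prop:canonical-topology} applied with $\sigma_t=\tau_t$ shows that $\tau^{\mathrm{can}}$ is a soft topology on $F$ and that its component topology at $t$ is exactly $\tau_t$. Hence $(\tau^{\mathrm{can}})_t=\tau_t$ for every $t\in A$. One could also argue directly: the inclusion $\tau\subseteq\tau^{\mathrm{can}}$ gives $\tau_t\subseteq(\tau^{\mathrm{can}})_t$, and the definition of $\Can$ gives the reverse inclusion.

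\emph{Inclusion $\tau^{\mathrm e}\subseteq(\tau^{\mathrm{can}})^{\mathrm e}$.} By \eqref{eq:e-topology}, every member of $\tau^{\mathrm e}$ has the form $\bigcup_{H\in\mathcal H}\SE(H)$ with $\mathcal H\subseteq\tau$. Since $\tau\subseteq\tau^{\mathrm{can}}$, we also have $\mathcal H\subseteq\tau^{\mathrm{can}}$. Applying \eqref{eq:e-topology} to $\tau^{\mathrm{can}}$ then shows that the same union lies in $(\tau^{\mathrm{can}})^{\mathrm e}$. Equivalently, the basis $\mathcal B_\tau$ is contained in $\mathcal B_{\tau^{\mathrm{can}}}$, so the topology generated by the first is contained in the topology generated by the second.

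No step is a genuine obstacle. The only point needing care is the component-topology claim, and it is handled entirely by citing Proposition~\ref{prop:canonical-topology} rather than reproving it.
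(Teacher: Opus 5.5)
Your proof is correct and follows the paper's argument step for step. Membership of $H(t)$ in $\tau_t$ gives $\tau\subseteq\tau^{\mathrm{can}}$, Proposition~\ref{prop:canonical-topology} gives the component topologies, and the containment $\mathcal B_\tau\subseteq\mathcal B_{\tau^{\mathrm{can}}}$ of bases gives $\tau^{\mathrm e}\subseteq(\tau^{\mathrm{can}})^{\mathrm e}$.
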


\begin{proof}
If $H\in\tau$, then $H(t)\in\tau_t$, for every $t\in A$. Hence, $H\in\tau^{\mathrm{can}}$, and $\tau\subseteq\tau^{\mathrm{can}}$. Proposition~\ref{prop:canonical-topology} shows that the component topology of $\tau^{\mathrm{can}}$ at $t$ is $\tau_t$. Finally, every basic set $\SE(H)$ with $H\in\tau$ is also a basic set for $(\tau^{\mathrm{can}})^{\mathrm e}$. Therefore, $\tau^{\mathrm e}\subseteq(\tau^{\mathrm{can}})^{\mathrm e}$.
\end{proof}

The last inclusion may be strict.

\begin{example}[A noncanonical soft topology]\label{ex:noncanonical}
Use $A$, $F$, $H_0$, and $H_1$ from Example~\ref{ex:strict-union}, and let
\[
\tau=\{\Phi,H_0,H_1,F\}.
\]
This is a soft topology because $H_0\softcap H_1=\Phi$ and $H_0\softcup H_1=F$. Both component topologies are discrete, but $\tau$ is not canonical. Its induced soft-element topology is
\[
\tau^{\mathrm e}
 =\{\varnothing,\{(0,0)\},\{(1,1)\},
     \{(0,0),(1,1)\},\SE(F)\}.
\]
The soft elements $(0,1)$ and $(1,0)$ have the same open neighborhoods, so $\tau^{\mathrm e}$ is not $T_0$. In contrast, $(\tau^{\mathrm{can}})^{\mathrm e}$ is discrete. Thus, the component topologies do not determine $\tau^{\mathrm e}$ without canonicality.
\end{example}

\subsection{Soft bitopological spaces}

\begin{definition}[Soft bitopological space \cite{SenelCagman2014,Ittanagi2014,NazShabirAli2015}]\label{def:soft-bitopology}
A \emph{soft bitopological space} is a triple $(F,\tau_1,\tau_2)$, where $\tau_1$ and $\tau_2$ are soft topologies on the same soft set $F$. It is \emph{canonical} if both $\tau_1$ and $\tau_2$ are canonical.
\end{definition}

\begin{theorem}\label{thm:induced-bitopology}
If $(F,\tau_1,\tau_2)$ is a soft bitopological space, then
\[
(\SE(F),\tau_1^{\mathrm e},\tau_2^{\mathrm e})
\]
is a bitopological space. We call it the \emph{induced soft-element bitopological space} of $(F,\tau_1,\tau_2)$.
\end{theorem}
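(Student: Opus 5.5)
The claim follows almost immediately from Theorem~\ref{thm:e-topology}. By Definition~\ref{def:classical-pairwise}, a bitopological space is simply a set together with two topologies on it, so it suffices to check that $\tau_1^{\mathrm e}$ and $\tau_2^{\mathrm e}$ are both topologies on the same underlying set $\SE(F)$.

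First I apply Theorem~\ref{thm:e-topology} to the soft topological space $(F,\tau_1)$. The family $\mathcal B_{\tau_1}=\{\SE(H):H\in\tau_1\}$ is a basis on $\SE(F)$: it covers $\SE(F)$ because $F\in\tau_1$, and it is closed under pairwise intersections because Lemma~\ref{lem:se-identities} gives $\SE(H)\cap\SE(K)=\SE(H\softcap K)$ with $H\softcap K\in\tau_1$. Hence the set of all unions of its members, $\tau_1^{\mathrm e}$ as in \eqref{eq:e-topology}, is a topology on $\SE(F)$. The same argument applied to $(F,\tau_2)$ shows that $\tau_2^{\mathrm e}$ is a topology on $\SE(F)$.

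Both topologies live on the same set $\SE(F)$, because $\tau_1$ and $\tau_2$ are soft topologies on the same fixed soft set $F$ (Definition~\ref{def:soft-bitopology}). Therefore $(\SE(F),\tau_1^{\mathrm e},\tau_2^{\mathrm e})$ is a bitopological space.

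There is no real obstacle here. The only point to note is that $\SE(F)$ is nonempty, which is the standing assumption of Section~\ref{sec:prelim}; it is not strictly required for a topology, but it keeps the space nondegenerate.
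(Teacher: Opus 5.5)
Your proposal is correct and follows the paper's proof: apply Theorem~\ref{thm:e-topology} to $\tau_1$ and to $\tau_2$ separately, and note that both resulting topologies live on the same set $\SE(F)$, which is all that the definition of a bitopological space requires. Re-checking the basis axioms yourself is redundant but harmless.
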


\begin{proof}
By Theorem~\ref{thm:e-topology}, both $\tau_1^{\mathrm e}$ and $\tau_2^{\mathrm e}$ are topologies on the same set $\SE(F)$. This is exactly the definition of a bitopological space.
\end{proof}

% ====================================================================
\section{Pairwise soft separation axioms}\label{sec:separation}

We define pairwise soft separation by using soft elements and soft open sets.

\begin{definition}[Pairwise soft $T_0$]\label{def:soft-T0}
A soft bitopological space $(F,\tau_1,\tau_2)$ is \emph{pairwise soft $T_0$} if, for distinct $a,b\in\SE(F)$, either there is $H\in\tau_1$ such that
\[
a\softin H,\qquad b\softnotin H,
\]
or there is $K\in\tau_2$ such that
\[
b\softin K,\qquad a\softnotin K.
\]
\end{definition}

\begin{definition}[Pairwise soft $T_1$]\label{def:soft-T1}
A soft bitopological space $(F,\tau_1,\tau_2)$ is \emph{pairwise soft $T_1$} if, for distinct $a,b\in\SE(F)$, there are $H\in\tau_1$ and $K\in\tau_2$ such that
\[
a\softin H,\quad b\softnotin H,
\qquad
b\softin K,\quad a\softnotin K.
\]
\end{definition}

\begin{definition}[Pairwise soft $T_2$]\label{def:soft-T2}
A soft bitopological space $(F,\tau_1,\tau_2)$ is \emph{pairwise soft $T_2$} if, for distinct $a,b\in\SE(F)$, there are $H\in\tau_1$ and $K\in\tau_2$ such that
\[
a\softin H,\qquad b\softin K,
\qquad \SE(H)\cap\SE(K)=\varnothing.
\]
\end{definition}

The last condition says that no soft element belongs to both soft open sets. Requiring $H\softcap K=\Phi$ would be stronger because it would force $H(t)\cap K(t)=\varnothing$, for every $t\in A$. If distinct soft elements $a$ and $b$ agree at a parameter $t$, then any soft sets $H$ and $K$ with $a\softin H$ and $b\softin K$ satisfy $a(t)=b(t)\in H(t)\cap K(t)$. Thus, $H\softcap K=\Phi$ cannot separate that pair. For finite $A$,
\[
\SE(H)\cap\SE(K)=\varnothing
\]
is equivalent to $H(t)\cap K(t)=\varnothing$ for at least one $t\in A$.

\begin{proposition}\label{prop:hierarchy}
Pairwise soft $T_2$ implies pairwise soft $T_1$, and pairwise soft $T_1$ implies pairwise soft $T_0$.
\end{proposition}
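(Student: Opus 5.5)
The plan is to prove the two implications separately, fixing distinct $a,b\in\SE(F)$ each time and working directly from Definitions~\ref{def:soft-T0}, \ref{def:soft-T1} and \ref{def:soft-T2}. Both implications will be direct, and no earlier result beyond the definitions is needed.

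For $T_2\Rightarrow T_1$, I will first apply pairwise soft $T_2$ to the pair $(a,b)$. This yields $H\in\tau_1$ and $K\in\tau_2$ with $a\softin H$, $b\softin K$ and $\SE(H)\cap\SE(K)=\varnothing$. The same $H$ and $K$ then witness pairwise soft $T_1$ for $(a,b)$:
\begin{itemize}
\item if $b\softin H$, then $b\in\SE(H)\cap\SE(K)$, which is a contradiction, so $b\softnotin H$;
\item symmetrically, if $a\softin K$, then $a\in\SE(H)\cap\SE(K)$, so $a\softnotin K$.
\end{itemize}
Since the $T_1$ definition quantifies over all distinct pairs in exactly the same way, the roles of $a$ and $b$ need no further handling.

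For $T_1\Rightarrow T_0$, pairwise soft $T_1$ supplies $H\in\tau_1$ with $a\softin H$ and $b\softnotin H$. This alone satisfies the first alternative of Definition~\ref{def:soft-T0}.

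There is no real obstacle here. The only point requiring care is the first step: disjointness must be read as $\SE(H)\cap\SE(K)=\varnothing$ rather than $H\softcap K=\Phi$, and the argument uses precisely this form of the condition.
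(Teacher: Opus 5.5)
Your proposal is correct and follows essentially the same route as the paper. The paper also reuses the same pair $H,K$, noting that $\SE(H)\cap\SE(K)=\varnothing$ forces $b\softnotin H$ and $a\softnotin K$, and then obtains $T_0$ by keeping either one of the two separating soft open sets.
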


\begin{proof}
Assume that $a\softin H$, $b\softin K$, and $\SE(H)\cap\SE(K)=\varnothing$. Since $b\in\SE(K)$, the equality of the intersection to the empty set gives $b\notin\SE(H)$; hence, $b\softnotin H$. Similarly, $a\softnotin K$. The two soft open sets therefore satisfy Definition~\ref{def:soft-T1}. The implication from pairwise soft $T_1$ to pairwise soft $T_0$ follows by retaining either of the two separating soft open sets.
\end{proof}

\subsection{Comparison with the induced soft-element bitopological space}

\begin{theorem}\label{thm:soft-induced-equivalence}
For $j\in\{0,1,2\}$, a soft bitopological space $(F,\tau_1,\tau_2)$ is pairwise soft $T_j$ if and only if its induced soft-element bitopological space
\[
(\SE(F),\tau_1^{\mathrm e},\tau_2^{\mathrm e})
\]
is pairwise $T_j$.
\end{theorem}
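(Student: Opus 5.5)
The plan is to prove each equivalence by translating between soft open sets and basic open sets of the induced topologies. The key observation is that, by Theorem~\ref{thm:e-topology}, every member of $\mathcal B_{\tau_i}$ is open in $\tau_i^{\mathrm e}$. Moreover, every open set of $\tau_i^{\mathrm e}$ containing a point $a$ contains a basic set $\SE(H)$ with $H\in\tau_i$ and $a\in\SE(H)$. Since $a\softin H$ means exactly $a\in\SE(H)$, each soft condition in Definitions~\ref{def:soft-T0}--\ref{def:soft-T2} becomes the corresponding condition of Definition~\ref{def:classical-pairwise} with $U=\SE(H)$ and $V=\SE(K)$.

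\emph{Forward direction.} Assume the space is pairwise soft $T_j$ and let $a\neq b$ in $\SE(F)$. Take the soft open sets $H\in\tau_1$ and $K\in\tau_2$ supplied by the definition, and put $U=\SE(H)\in\tau_1^{\mathrm e}$ and $V=\SE(K)\in\tau_2^{\mathrm e}$.
\begin{enumerate}[label=(\roman*)]
\item For $j=0,1$, the membership and non-membership conditions transfer verbatim.
\item For $j=2$, the condition $\SE(H)\cap\SE(K)=\varnothing$ is literally $U\cap V=\varnothing$.
\end{enumerate}
Hence the induced space is pairwise $T_j$.

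\emph{Converse.} Assume the induced space is pairwise $T_j$ and let $a\neq b$. Given $U\in\tau_1^{\mathrm e}$ with $a\in U$, write $U$ as a union of basic sets as in \eqref{eq:e-topology} and pick $H\in\tau_1$ with $a\in\SE(H)\subseteq U$. Similarly, pick $K\in\tau_2$ with $b\in\SE(K)\subseteq V$. Shrinking preserves everything we need:
\begin{enumerate}[label=(\roman*)]
\item If $b\notin U$, then $b\notin\SE(H)$, so $b\softnotin H$; likewise $x\notin V$ gives $a\softnotin K$.
\item If $U\cap V=\varnothing$, then $\SE(H)\cap\SE(K)=\varnothing$.
\end{enumerate}
For $j=0$, only the alternative that actually occurs needs to be shrunk. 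For $j=1$ and $j=2$, both sets are shrunk.

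There is no real obstacle. The only point needing care is the converse, because open sets of $\tau^{\mathrm e}$ need not be single basic sets (Example~\ref{ex:not-one-basic-set}). This is exactly why the shrinking to a basic neighbourhood is used, rather than trying to write $U$ itself as $\SE(H)$.
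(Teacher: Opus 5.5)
Your proposal is correct and follows essentially the same route as the paper: the forward direction uses that each $\SE(H)$ with $H\in\tau_i$ is open in $\tau_i^{\mathrm e}$, and the converse shrinks each separating open set to a basic neighbourhood $\SE(H)\subseteq U$, which preserves non-membership and disjointness. One small typo: the ``$x\notin V$'' in your converse should read $a\notin V$.
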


\begin{proof}
Suppose first that $(F,\tau_1,\tau_2)$ is pairwise soft $T_j$. Every set $\SE(H)$ with $H\in\tau_i$ is open in $\tau_i^{\mathrm e}$. Thus, the soft open sets in Definitions~\ref{def:soft-T0}--\ref{def:soft-T2} give the open sets required by Definition~\ref{def:classical-pairwise}. In the $T_2$ case, their induced basic sets are disjoint by definition. Hence, the induced soft-element bitopological space is pairwise $T_j$.

Conversely, suppose that the induced soft-element bitopological space is pairwise $T_j$. Let $a,b\in\SE(F)$ be distinct.

For $j=0$, assume first that some $U\in\tau_1^{\mathrm e}$ contains $a$ but not $b$. Since $\mathcal B_{\tau_1}$ is a basis, there is $H\in\tau_1$ such that
\[
a\in\SE(H)\subseteq U.
\]
Then, $a\softin H$ and $b\softnotin H$. If the separating open set belongs to $\tau_2^{\mathrm e}$, the same argument gives the second alternative in Definition~\ref{def:soft-T0}.

For $j=1$, apply the basis argument to both separating open sets. This gives $H\in\tau_1$ and $K\in\tau_2$ satisfying Definition~\ref{def:soft-T1}.

For $j=2$, choose disjoint open sets $U\in\tau_1^{\mathrm e}$ and $V\in\tau_2^{\mathrm e}$ with $a\in U$ and $b\in V$. Choose $H\in\tau_1$ and $K\in\tau_2$ such that
\[
a\in\SE(H)\subseteq U,
\qquad
b\in\SE(K)\subseteq V.
\]
Since $U\cap V=\varnothing$, we have $\SE(H)\cap\SE(K)=\varnothing$. Therefore, $(F,\tau_1,\tau_2)$ is pairwise soft $T_2$.
\end{proof}

\subsection{Comparison with component bitopological spaces}

For $i\in\{1,2\}$ and $t\in A$, let
\[
(\tau_i)_t=\{H(t):H\in\tau_i\}.
\]
By Proposition~\ref{prop:component-topology},
\[
(F(t),(\tau_1)_t,(\tau_2)_t)
\]
is a bitopological space. We call it the \emph{component bitopological space at $t$}.

\begin{theorem}\label{thm:soft-to-components}
Let $(F,\tau_1,\tau_2)$ be pairwise soft $T_j$, where $j\in\{0,1,2\}$. Every component bitopological space is pairwise $T_j$.
\end{theorem}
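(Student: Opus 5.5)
The plan is to lift points of a component space to soft elements that differ only at the parameter $t$, apply the soft separation axiom to those two soft elements, and then project back to the coordinate $t$.

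Fix $t_0\in A$ and distinct $x,y\in F(t_0)$. Since $\SE(F)\neq\varnothing$, choose some $c\in\SE(F)$. Define $a,b\in\SE(F)$ by
\[
a(t_0)=x,\qquad b(t_0)=y,\qquad a(s)=b(s)=c(s)\ \text{ for } s\neq t_0 .
\]
Then $a\neq b$, because they differ at $t_0$. No further choice is needed beyond the standing assumption $\SE(F)\neq\varnothing$.

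For $j=0$, the pairwise soft $T_0$ condition gives either $H\in\tau_1$ with $a\softin H$ and $b\softnotin H$, or $K\in\tau_2$ with $b\softin K$ and $a\softnotin K$. In the first case, $a\softin H$ gives $x=a(t_0)\in H(t_0)$ and $c(s)\in H(s)$ for all $s\neq t_0$. Since $b$ agrees with $a$ off $t_0$, the only way $b\softnotin H$ can hold is $y=b(t_0)\notin H(t_0)$. Hence $U=H(t_0)\in(\tau_1)_{t_0}$ separates $x$ from $y$. The second case is symmetric and yields $V=K(t_0)\in(\tau_2)_{t_0}$ with $y\in V$ and $x\notin V$.

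For $j=1$, apply this same argument to both soft open sets at once. We obtain $H(t_0)\in(\tau_1)_{t_0}$ and $K(t_0)\in(\tau_2)_{t_0}$ satisfying part (ii) of Definition~\ref{def:classical-pairwise}.

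For $j=2$, take $H\in\tau_1$ and $K\in\tau_2$ with $a\softin H$, $b\softin K$ and $\SE(H)\cap\SE(K)=\varnothing$. We claim that $H(t_0)\cap K(t_0)=\varnothing$. Suppose instead that $z\in H(t_0)\cap K(t_0)$. Define $d$ by $d(t_0)=z$ and $d(s)=c(s)$ for $s\neq t_0$; this is a soft element, since $z\in H(t_0)\subseteq F(t_0)$. For $s\neq t_0$ we have $c(s)=a(s)\in H(s)$ and $c(s)=b(s)\in K(s)$. Thus $d\in\SE(H)\cap\SE(K)$, a contradiction. It follows that $x\in H(t_0)$, $y\in K(t_0)$, and these two sets are disjoint open sets of the respective component topologies.

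The main point needing care is this $T_2$ step. The soft condition only gives disjointness of the sets of soft elements, not coordinatewise disjointness, and the argument recovers disjointness at $t_0$ precisely because $a$ and $b$ share their values off $t_0$, which lets us build the witness $d$. The $T_0$ and $T_1$ cases are routine once the two lifts are chosen this way.
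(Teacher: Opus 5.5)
Your proof is correct and follows essentially the same route as the paper. Like the paper, you lift $x,y$ to soft elements that agree with a fixed $c\in\SE(F)$ off $t_0$ and read off the separating sets at $t_0$. In the $T_2$ case you likewise build the witness $d$ to force $H(t_0)\cap K(t_0)=\varnothing$.
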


\begin{proof}
Fix $t_0\in A$ and distinct points $x,y\in F(t_0)$. Since $\SE(F)\neq\varnothing$, choose a soft element
$c\in\SE(F)$. Define $a,b\colon A\to X$ by
\[
a(t_0)=x,\qquad b(t_0)=y,
\qquad a(t)=b(t)=c(t)\quad\text{when }t\neq t_0.
\]
Both $a$ and $b$ belong to $\SE(F)$, and they are distinct.

For $j=0$, the pairwise soft $T_0$ condition gives one of two alternatives. Suppose that $H\in\tau_1$, $a\softin H$, and $b\softnotin H$. Since $a(t)=b(t)$ when $t\neq t_0$, failure of $b$ to belong to $H$ must occur at $t_0$. Thus,
\[
x=a(t_0)\in H(t_0),
\qquad
y=b(t_0)\notin H(t_0).
\]
Since $H(t_0)\in(\tau_1)_{t_0}$, it separates $x$ from $y$. The alternative involving $\tau_2$ is proved in the same way.

For $j=1$, choose $H\in\tau_1$ and $K\in\tau_2$ from Definition~\ref{def:soft-T1}. The preceding argument gives
\[
x\in H(t_0),\quad y\notin H(t_0),
\qquad
y\in K(t_0),\quad x\notin K(t_0).
\]
Hence, the component bitopological space is pairwise $T_1$.

For $j=2$, choose $H\in\tau_1$ and $K\in\tau_2$ such that $a\softin H$, $b\softin K$, and $\SE(H)\cap\SE(K)=\varnothing$. Suppose that some $z$ belonged to $H(t_0)\cap K(t_0)$. Define
\[
d(t_0)=z,
\qquad d(t)=c(t)\quad\text{when }t\neq t_0.
\]
For $t\neq t_0$, the common value $c(t)=a(t)=b(t)$ belongs to both $H(t)$ and $K(t)$. At $t_0$, the point $z$ also belongs to both sets. Hence, $d\in\SE(H)\cap\SE(K)$, a contradiction. Therefore,
\[
H(t_0)\cap K(t_0)=\varnothing,
\]
and the component bitopological space is pairwise $T_2$.
\end{proof}
For use in the next theorem, let $t_0\in A$ and
$U\subseteq F(t_0)$. We write $\Cyl_{t_0}(U)$ for the soft set
defined by
\[
\Cyl_{t_0}(U)(t)=
\begin{cases}
U, & t=t_0,\\
F(t), & t\neq t_0.
\end{cases}
\]
If $\tau$ is canonical and $U\in\tau_{t_0}$, then
$\Cyl_{t_0}(U)\in\tau$.

\begin{theorem}\label{thm:components-to-soft}
Let $(F,\tau_1,\tau_2)$ be canonical. If every component bitopological space is pairwise $T_j$, where $j\in\{0,1,2\}$, then $(F,\tau_1,\tau_2)$ is pairwise soft $T_j$.
\end{theorem}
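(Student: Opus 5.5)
Take distinct soft elements $a,b\in\SE(F)$. Because they differ as functions, we can fix a parameter $t_0\in A$ with $a(t_0)\neq b(t_0)$, and set $x=a(t_0)$ and $y=b(t_0)$. These are distinct points of $F(t_0)$. The plan is to apply the pairwise $T_j$ hypothesis in the component bitopological space $(F(t_0),(\tau_1)_{t_0},(\tau_2)_{t_0})$ to $x$ and $y$. Each component open set $U$ obtained this way is then lifted to the soft set $\Cyl_{t_0}(U)$. By canonicality, as remarked just before the statement, $\Cyl_{t_0}(U)\in\tau_i$ whenever $U\in(\tau_i)_{t_0}$. The whole argument rests on one observation: for any $c\in\SE(F)$,
\[
c\softin\Cyl_{t_0}(U)\iff c(t_0)\in U,
\]
because $c(t)\in F(t)$ holds automatically at every $t\neq t_0$.

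For $j=0$, the component hypothesis gives either $U\in(\tau_1)_{t_0}$ with $x\in U$ and $y\notin U$, or $V\in(\tau_2)_{t_0}$ with $y\in V$ and $x\notin V$. In the first case put $H=\Cyl_{t_0}(U)\in\tau_1$. The observation gives $a\softin H$ and $b\softnotin H$. The second case is symmetric with $K=\Cyl_{t_0}(V)\in\tau_2$, which yields $b\softin K$ and $a\softnotin K$. For $j=1$ we get both $U$ and $V$ at once, and the same two lifts satisfy Definition~\ref{def:soft-T1}.

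For $j=2$, choose $U\in(\tau_1)_{t_0}$ and $V\in(\tau_2)_{t_0}$ with $x\in U$, $y\in V$ and $U\cap V=\varnothing$. Put $H=\Cyl_{t_0}(U)$ and $K=\Cyl_{t_0}(V)$. Then $a\softin H$ and $b\softin K$. Suppose some $d$ lay in $\SE(H)\cap\SE(K)$. Then $d(t_0)\in U\cap V=\varnothing$, which is impossible, so $\SE(H)\cap\SE(K)=\varnothing$, as Definition~\ref{def:soft-T2} requires. Note that we do not need $H\softcap K=\Phi$, and in general it fails, in line with the remark after Definition~\ref{def:soft-T2}.

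I expect no step to be a serious obstacle. The point needing the most care is that canonicality is really used: we need $\Cyl_{t_0}(U)$ to lie in $\tau_i$, and a merely soft topology whose component at $t_0$ contains $U$ might contain no soft set that equals $U$ at $t_0$ and is large elsewhere. Example~\ref{ex:noncanonical} shows this. The rest is the coordinatewise membership observation above.
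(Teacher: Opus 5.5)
Your proposal is correct and follows the paper's proof: choose $t_0$ with $a(t_0)\neq b(t_0)$, apply the component pairwise $T_j$ axiom at $t_0$, and lift the resulting open sets to $\Cyl_{t_0}(U)\in\tau_1$ and $\Cyl_{t_0}(V)\in\tau_2$ using canonicality. Your observation that $c\softin\Cyl_{t_0}(U)$ if and only if $c(t_0)\in U$ is what the paper uses implicitly, including for $\SE(H)\cap\SE(K)=\varnothing$ in the $T_2$ case.
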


\begin{proof}
Let $a,b\in\SE(F)$ be distinct. Choose $t_0\in A$ such that $a(t_0)\neq b(t_0)$.

For $j=0$, the component axiom gives either an open set $U\in(\tau_1)_{t_0}$ such that
\[
a(t_0)\in U,\qquad b(t_0)\notin U,
\]
or an open set $V\in(\tau_2)_{t_0}$ such that
\[
b(t_0)\in V,\qquad a(t_0)\notin V.
\]
In the first case, canonicality gives $\Cyl_{t_0}(U)\in\tau_1$, and this soft open set contains $a$ but not $b$. The second case gives the other alternative in Definition~\ref{def:soft-T0}.

For $j=1$, choose $U\in(\tau_1)_{t_0}$ and $V\in(\tau_2)_{t_0}$ such that
\[
a(t_0)\in U,\quad b(t_0)\notin U,
\qquad
b(t_0)\in V,\quad a(t_0)\notin V.
\]
The soft open sets $\Cyl_{t_0}(U)\in\tau_1$ and $\Cyl_{t_0}(V)\in\tau_2$ satisfy Definition~\ref{def:soft-T1}.

For $j=2$, choose $U\in(\tau_1)_{t_0}$ and $V\in(\tau_2)_{t_0}$ such that
\[
a(t_0)\in U,\qquad b(t_0)\in V,
\qquad U\cap V=\varnothing.
\]
Let $H=\Cyl_{t_0}(U)$ and $K=\Cyl_{t_0}(V)$. Canonicality gives $H\in\tau_1$ and $K\in\tau_2$. We have $a\softin H$ and $b\softin K$. No soft element can belong to both $H$ and $K$ because their values at $t_0$ are disjoint. Thus, $\SE(H)\cap\SE(K)=\varnothing$, and the space is pairwise soft $T_2$.
\end{proof}

\begin{corollary}\label{cor:three-equivalent}
Let $(F,\tau_1,\tau_2)$ be canonical and let $j\in\{0,1,2\}$. The following statements are equivalent:
\begin{enumerate}[label=(\roman*)]
\item $(F,\tau_1,\tau_2)$ is pairwise soft $T_j$;
\item the induced soft-element bitopological space $(\SE(F),\tau_1^{\mathrm e},\tau_2^{\mathrm e})$ is pairwise $T_j$;
\item the component bitopological space $(F(t),(\tau_1)_t,(\tau_2)_t)$ is pairwise $T_j$, for every $t\in A$.
\end{enumerate}
\end{corollary}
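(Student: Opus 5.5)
The corollary follows by combining the three preceding results; no new construction is needed. I will fix $j\in\{0,1,2\}$ and a canonical soft bitopological space $(F,\tau_1,\tau_2)$, and then establish the cycle of implications (i)$\Leftrightarrow$(ii) and (i)$\Leftrightarrow$(iii).

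The equivalence (i)$\Leftrightarrow$(ii) is exactly Theorem~\ref{thm:soft-induced-equivalence}. That theorem holds for every soft bitopological space, so canonicality plays no role in this step.

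For (i)$\Rightarrow$(iii), I will invoke Theorem~\ref{thm:soft-to-components}. It also requires no canonicality; it only uses the standing assumption $\SE(F)\neq\varnothing$, which is needed to build soft elements $a,b$ that differ at a single parameter. For (iii)$\Rightarrow$(i), I will invoke Theorem~\ref{thm:components-to-soft}. This is the only place where the hypothesis that both $\tau_1$ and $\tau_2$ are canonical is used, because it guarantees $\Cyl_{t_0}(U)\in\tau_i$ whenever $U\in(\tau_i)_{t_0}$.

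Chaining these three implications gives the equivalence of (i), (ii) and (iii). I expect no real obstacle here. The only point worth noting is that the proof of Theorem~\ref{thm:components-to-soft} chooses a parameter $t_0$ with $a(t_0)\neq b(t_0)$, which exists because $a\neq b$ as functions on $A$. The corollary can then be closed with a remark that canonicality is needed only for (iii)$\Rightarrow$(i). Example~\ref{ex:noncanonical} shows that this implication can fail without it: there the component topologies are discrete, yet $\tau^{\mathrm e}$ is not $T_0$. Taking $\tau_1=\tau_2=\tau$ in that example therefore gives a noncanonical soft bitopological space satisfying (iii) but not (ii).
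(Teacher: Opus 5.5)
Your proposal is correct and matches the paper's proof: (i)$\Leftrightarrow$(ii) is Theorem~\ref{thm:soft-induced-equivalence}, and (i)$\Leftrightarrow$(iii) combines Theorems~\ref{thm:soft-to-components} and~\ref{thm:components-to-soft}. Your closing remark also agrees with the paper. Canonicality is used only in (iii)$\Rightarrow$(i), and Example~\ref{ex:noncanonical} with $\tau_1=\tau_2=\tau$ shows that this implication fails without it.
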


\begin{proof}
The equivalence of (i) and (ii) is Theorem~\ref{thm:soft-induced-equivalence}. Theorems~\ref{thm:soft-to-components} and~\ref{thm:components-to-soft} give the equivalence of (i) and (iii).
\end{proof}

\begin{example}[A canonical pairwise soft $T_2$ space]\label{ex:canonical-separation}
Let $A=\{p,q\}$, $F(p)=\{0,1\}$, and $F(q)=\{u,v\}$. Give both $F(p)$ and $F(q)$ the discrete topology, and let $\tau_1=\tau_2$ be the resulting canonical soft topology. Every component bitopological space is pairwise $T_2$. Hence, Corollary~\ref{cor:three-equivalent} shows that $(F,\tau_1,\tau_2)$ is pairwise soft $T_2$. For example, the soft elements $(0,u)$ and $(1,u)$ are separated by the soft open sets $\Cyl_p(\{0\})$ and $\Cyl_p(\{1\})$.
\end{example}

Canonicality is needed in Theorem~\ref{thm:components-to-soft}. In Example~\ref{ex:noncanonical}, take $\tau_1=\tau_2=\tau$. Every component bitopological space is discrete and therefore pairwise $T_2$, but the induced soft-element bitopological space is not even pairwise $T_0$. By Theorem~\ref{thm:soft-induced-equivalence}, the soft bitopological space is not pairwise soft $T_0$.

% ====================================================================
\section{Pairwise soft compactness}\label{sec:compactness}

\begin{definition}\label{def:soft-compact}
Let $(F,\tau_1,\tau_2)$ be a soft bitopological space and let $H\softsub F$. A family $\mathcal C\subseteq\tau_1\cup\tau_2$ is a \emph{pairwise soft open cover} of $H$ if
\begin{equation}\label{eq:soft-cover}
H(t)\subseteq\bigcup_{C\in\mathcal C}C(t),\qquad\text{for every }t\in A.
\end{equation}
The soft set $H$ is \emph{pairwise soft compact} if every pairwise soft open cover of $H$ has a finite subcover. The soft bitopological space is pairwise soft compact when $F$ has this property.
\end{definition}

The condition in \eqref{eq:soft-cover} is required at every parameter. It does not say that the ordinary sets $\SE(C)$ cover $\SE(H)$. Thus, pairwise soft compactness is different from pairwise compactness of the induced soft-element bitopological space. The next example makes the distinction explicit.

\begin{example}\label{ex:soft-cover-not-element-cover}
Let $A=\{1,2\}$, let $F(1)=F(2)=\{0,1\}$, and use the canonical discrete soft topology for both $\tau_1$ and $\tau_2$. Define
\[
C_0(t)=\{0\},\qquad C_1(t)=\{1\}
\]
for both parameters. The family $\{C_0,C_1\}$ is a pairwise soft open cover of $F$ because $C_0(t)\cup C_1(t)=F(t)$, for every $t\in A$. However,
\[
\SE(C_0)\cup\SE(C_1)=\{(0,0),(1,1)\},
\]
which does not cover $\SE(F)$. Therefore, a pairwise soft open cover need not be an ordinary open cover of the soft-element set.
\end{example}

\begin{theorem}\label{thm:soft-compact-to-component}
Let $(F,\tau_1,\tau_2)$ be canonical. If $H\softsub F$ is pairwise soft compact, then $H(t)$ is pairwise compact in the component bitopological space
\[
(F(t),(\tau_1)_t,(\tau_2)_t),\qquad\text{for every }t\in A.
\]
\end{theorem}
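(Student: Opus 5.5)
Fix $t_0\in A$ and let $\mathcal V\subseteq(\tau_1)_{t_0}\cup(\tau_2)_{t_0}$ be a pairwise open cover of $H(t_0)$ in the sense of Definition~\ref{def:classical-compact}. The plan is to lift $\mathcal V$ to a pairwise soft open cover of $H$ by cylinders, extract a finite soft subcover by pairwise soft compactness of $H$, and then read off the $t_0$-coordinates.

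\emph{Lifting.} For each $V\in\mathcal V$, choose an index $i(V)\in\{1,2\}$ with $V\in(\tau_{i(V)})_{t_0}$. The choice can be made definable, for instance by taking $i(V)=1$ whenever $V\in(\tau_1)_{t_0}$, so no choice principle is needed. Because both soft topologies are canonical, the remark preceding Theorem~\ref{thm:components-to-soft} gives
\[
\Cyl_{t_0}(V)\in\tau_{i(V)}.
\]
Put $\mathcal C=\{\Cyl_{t_0}(V):V\in\mathcal V\}\subseteq\tau_1\cup\tau_2$. At $t_0$ we have $\bigcup_{V\in\mathcal V}\Cyl_{t_0}(V)(t_0)=\bigcup\mathcal V\supseteq H(t_0)$.

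\emph{The parameters $t\neq t_0$.} This is the only real obstacle: each cylinder equals $F(t)\supseteq H(t)$ at $t\neq t_0$, so the union covers $H(t)$ provided $\mathcal V$ is nonempty. If $\mathcal V=\varnothing$, then $H(t_0)=\varnothing$, and the empty subfamily already covers $H(t_0)$. So this case is trivial, and we assume $\mathcal V\neq\varnothing$. Then \eqref{eq:soft-cover} holds and $\mathcal C$ is a pairwise soft open cover of $H$.

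\emph{Extraction.} Pairwise soft compactness of $H$ yields finitely many $V_1,\dots,V_n\in\mathcal V$ such that $\{\Cyl_{t_0}(V_k)\}_{k=1}^n$ soft-covers $H$. Evaluating at $t_0$ gives $H(t_0)\subseteq V_1\cup\dots\cup V_n$, which is a finite subfamily of $\mathcal V$ covering $H(t_0)$. Hence $H(t_0)$ is pairwise compact. Canonicality is used only in the lifting step, to guarantee that the cylinders are soft open.
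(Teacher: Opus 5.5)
Your proof is correct and follows the paper's argument: lift each $V\in\mathcal V$ to $\Cyl_{t_0}(V)\in\tau_{i(V)}$ using canonicality, apply pairwise soft compactness of $H$, and evaluate the finite soft subcover at $t_0$. You dispose of the case $\mathcal V=\varnothing$ first, which makes explicit why the cylinders cover $H(t)$ for $t\neq t_0$. This plays the same role as the paper's preliminary case $H(t_0)=\varnothing$.
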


\begin{proof}
Fix $t_0\in A$. If $H(t_0)=\varnothing$, the conclusion is immediate. Let $\mathcal V$ be a pairwise open cover of $H(t_0)$. For $V\in\mathcal V$, set $i(V)=1$ if
$V\in(\tau_1)_{t_0}$, and set $i(V)=2$ otherwise. Then,
\[
V\in(\tau_{i(V)})_{t_0}.
\]
Put $K_V=\Cyl_{t_0}(V)$. Since the soft bitopological space is
canonical, $K_V\in\tau_{i(V)}$.

We claim that $\{K_V:V\in\mathcal V\}$ is a pairwise soft open cover of $H$. At $t_0$, this follows from
\[
H(t_0)\subseteq\bigcup_{V\in\mathcal V}V
 =\bigcup_{V\in\mathcal V}K_V(t_0).
\]
If $t\neq t_0$, then $K_V(t)=F(t)$, for every $V\in\mathcal V$, so the cover condition also holds at $t$. Pairwise soft compactness gives $V_1,\ldots,V_n\in\mathcal V$ such that
\[
H\softsub K_{V_1}\softcup\cdots\softcup K_{V_n}.
\]
Taking the value at $t_0$ gives
\[
H(t_0)\subseteq V_1\cup\cdots\cup V_n.
\]
Hence, $H(t_0)$ is pairwise compact.
\end{proof}

\begin{theorem}\label{thm:finite-converse}
Assume that $A$ is finite. Let $(F,\tau_1,\tau_2)$ be a soft bitopological space and let $H\softsub F$. If $H(t)$ is pairwise compact in the component bitopological space
\[
(F(t),(\tau_1)_t,(\tau_2)_t),\qquad\text{for every }t\in A,
\]
then $H$ is pairwise soft compact.
\end{theorem}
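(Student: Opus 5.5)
Let $\mathcal C\subseteq\tau_1\cup\tau_2$ be a pairwise soft open cover of $H$. The plan is to pass to each parameter separately, extract a finite subcover there using pairwise compactness of the component, and then take the union of these finitely many finite subfamilies; finiteness of $A$ is what keeps this union finite. No canonicality is needed, because we only ever evaluate given soft open sets at a parameter, never build new soft open sets.

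Fix $t\in A$ and consider the family $\mathcal V_t=\{C(t):C\in\mathcal C\}$. Each $C\in\mathcal C$ lies in $\tau_1$ or $\tau_2$, so $C(t)\in(\tau_1)_t\cup(\tau_2)_t$ by the definition of the component topologies in \eqref{eq:component-topology}. Condition \eqref{eq:soft-cover} at the parameter $t$ says exactly that $H(t)\subseteq\bigcup_{V\in\mathcal V_t}V$. Thus $\mathcal V_t$ is a pairwise open cover of $H(t)$ in the sense of Definition~\ref{def:classical-compact}. This step uses the paper's relaxed convention: no nonempty member from each topology is required. Pairwise compactness of $H(t)$ gives finitely many sets $V_{t,1},\ldots,V_{t,n_t}\in\mathcal V_t$ covering $H(t)$. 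For each $k$ choose one $C_{t,k}\in\mathcal C$ with $C_{t,k}(t)=V_{t,k}$. Since only finitely many choices are made at each $t$ and $A$ is finite, this needs no choice principle. If $H(t)=\varnothing$, take $n_t=0$.

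Now put $\mathcal C_0=\{C_{t,k}:t\in A,\ 1\le k\le n_t\}$. This is finite because $A$ is finite and each $n_t$ is finite. For every $t\in A$ we have
\[
H(t)\subseteq\bigcup_{k=1}^{n_t}C_{t,k}(t)\subseteq\bigcup_{C\in\mathcal C_0}C(t),
\]
so $\mathcal C_0\subseteq\mathcal C$ is a finite pairwise soft open subcover of $H$. One degenerate case needs attention: if every $H(t)$ is empty, then $\mathcal C_0$ is empty. The empty family still satisfies \eqref{eq:soft-cover}, and if a nonempty subcover is wanted, any single member of $\mathcal C$ can be added.

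There is no real obstacle. The only point needing care is the bookkeeping step of lifting each component open set back to a member of $\mathcal C$; the infinite-parameter example promised in the paper shows that finiteness of $A$ is essential at the final union.
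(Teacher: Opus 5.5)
Your proof is correct and is essentially the paper's argument. You evaluate the soft cover at each parameter to get a pairwise open cover of $H(t)$, extract a finite subfamily there, and take the union over the finitely many parameters; your explicit lifting of each $V_{t,k}$ back to some $C_{t,k}\in\mathcal C$ and your treatment of the empty case simply spell out steps the paper compresses into one line.
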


\begin{proof}
Let $\mathcal C$ be a pairwise soft open cover of $H$. For a fixed $t\in A$, the family
\[
\{C(t):C\in\mathcal C\}
\]
is a pairwise open cover of $H(t)$. By the pairwise compactness of $H(t)$, there is a finite subfamily $\mathcal C_t\subseteq\mathcal C$ such that
\[
H(t)\subseteq\bigcup_{C\in\mathcal C_t}C(t).
\]
Since $A$ is finite, the family
\[
\mathcal C_0=\bigcup_{t\in A}\mathcal C_t
\]
is finite. It satisfies \eqref{eq:soft-cover} at every parameter and is therefore a finite pairwise soft subcover of $H$.
\end{proof}

\begin{corollary}\label{cor:compact-equivalence}
Let $A$ be finite and let $(F,\tau_1,\tau_2)$ be canonical. A soft subset $H\softsub F$ is pairwise soft compact if and only if $H(t)$ is pairwise compact in the component bitopological space, for every $t\in A$.
\end{corollary}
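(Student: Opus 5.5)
The corollary should follow by combining the two theorems proved just before it, one for each direction.

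For the forward implication, assume $H\softsub F$ is pairwise soft compact. Since $(F,\tau_1,\tau_2)$ is canonical, Theorem~\ref{thm:soft-compact-to-component} applies directly and gives that $H(t)$ is pairwise compact in $(F(t),(\tau_1)_t,(\tau_2)_t)$ for every $t\in A$. This direction uses canonicality, through the sets $\Cyl_{t_0}(V)$, but does not use finiteness of $A$.

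For the reverse implication, assume that $H(t)$ is pairwise compact in every component bitopological space. Since $A$ is finite, Theorem~\ref{thm:finite-converse} applies with no further hypothesis, and shows that $H$ is pairwise soft compact. This direction uses finiteness of $A$ but not canonicality, so the two hypotheses of the corollary are each needed for exactly one direction.

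There is no real obstacle here. The only point to check is that both theorems refer to the same component spaces, namely $(F(t),(\tau_1)_t,(\tau_2)_t)$. They also use the same convention for pairwise open covers, that of Definition~\ref{def:classical-compact}, which allows covers drawn from only one topology. Once this is confirmed, the two implications together give the stated equivalence.
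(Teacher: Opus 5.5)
Your proposal is correct and matches the paper's approach: the corollary follows by combining Theorem~\ref{thm:soft-compact-to-component} for the forward direction, which uses canonicality, with Theorem~\ref{thm:finite-converse} for the converse, which uses finiteness of $A$. Your observation that each hypothesis is needed for exactly one direction is also accurate.
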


\begin{example}[The finite-parameter assumption is needed]\label{ex:infinite-compact}
Let $A$ be infinite and let $F(t)=\{0,1\}$, for every $t\in A$. Take $\tau_1=\tau_2$ to be the canonical soft topology generated by the discrete topology on each $F(t)$. Every soft subset of $F$ is open, and every component bitopological space is finite and pairwise compact.

For every $t\in A$, define $H_t\softsub F$ by
\[
H_t(t)=\{1\},
\qquad
H_t(s)=\{0\}\quad\text{when }s\neq t.
\]
Fix $s\in A$. Since $A$ is infinite, the family contains $H_s$, whose value at $s$ is $\{1\}$, and also some $H_t$ with $t\neq s$, whose value at $s$ is $\{0\}$. Hence,
\[
\bigcup_{t\in A}H_t(s)=\{0,1\}=F(s).
\]
Thus, $\{H_t:t\in A\}$ is a pairwise soft open cover of $F$.

Now take a finite subfamily $H_{t_1},\ldots,H_{t_n}$. Choose
$s\in A\setminus\{t_1,\ldots,t_n\}$. Then,
\[
\bigcup_{k=1}^{n}H_{t_k}(s)=\{0\}\neq F(s).
\]
No finite subfamily covers $F$. Therefore, componentwise pairwise compactness does not imply pairwise soft compactness for an infinite parameter set.
\end{example}

% ====================================================================
\section{Conclusion}

A soft topology $\tau$ on $F$ determines an ordinary topology on $\SE(F)$ once the sets $\SE(H)$, $H\in\tau$, are used as a basis. This distinction matters because the basic sets need not be closed under unions. With this construction, two soft topologies give the induced soft-element bitopological space $(\SE(F),\tau_1^{\mathrm e},\tau_2^{\mathrm e})$.

Pairwise soft $T_0$, $T_1$, and $T_2$ agree with the corresponding pairwise axioms of this induced space. In the canonical case, they also agree with the same axioms in every component bitopological space. Pairwise soft compactness passes from a canonical soft bitopological space to its components. The converse holds for finitely many parameters, but Example~\ref{ex:infinite-compact} shows that component compactness alone is not enough when the parameter set is infinite.

One open problem is to find useful conditions, weaker than finiteness of $A$, under which componentwise pairwise compactness still implies pairwise soft compactness. Other questions concern pairwise soft regularity, normality, and continuous mappings, and how these notions are reflected by the induced soft-element bitopological space.

% ====================================================================
\section*{Author Contributions}
S. Ray conceived and supervised the study. S. Ray and T. Barik checked the arguments, prepared the manuscript, and approved the final version.

\section*{Conflicts of Interest}
The authors declare no conflict of interest.

\section*{Ethical Review and Approval}
No ethical approval was required for this study.

% ====================================================================


\begin{thebibliography}{99}

\bibitem{Molodtsov1999}
D.~Molodtsov,
\newblock Soft set theory---first results,
\newblock \emph{Computers \& Mathematics with Applications} \textbf{37} (1999), 19--31,
\newblock doi: 10.1016/S0898-1221(99)00056-5.

\bibitem{Maji2003}
P.~K. Maji, R.~Biswas, and A.~R. Roy,
\newblock Soft set theory,
\newblock \emph{Computers \& Mathematics with Applications} \textbf{45} (2003), 555--562,
\newblock doi: 10.1016/S0898-1221(03)00016-6.

\bibitem{PeiMiao2005}
D.~Pei and D.~Miao,
\newblock From soft sets to information systems,
\newblock in: \emph{Proceedings of the 2005 IEEE International Conference on Granular Computing}, Vol.~2, IEEE, 2005, pp.~617--621,
\newblock doi: 10.1109/GRC.2005.1547365.

\bibitem{Ali2009}
M.~I. Ali, F.~Feng, X.~Liu, W.~K. Min, and M.~Shabir,
\newblock On some new operations in soft set theory,
\newblock \emph{Computers \& Mathematics with Applications} \textbf{57}(9) (2009), 1547--1553,
\newblock doi: 10.1016/j.camwa.2008.11.009.

\bibitem{CagmanEnginoglu2010}
N.~\c{C}a\u{g}man and S.~Engino\u{g}lu,
\newblock Soft set theory and uni-int decision making,
\newblock \emph{European Journal of Operational Research} \textbf{207}(2) (2010), 848--855,
\newblock doi: 10.1016/j.ejor.2010.05.004.

\bibitem{Cagman2011}
N.~\c{C}a\u{g}man, S.~Karata\c{s}, and S.~Engino\u{g}lu,
\newblock Soft topology,
\newblock \emph{Computers \& Mathematics with Applications} \textbf{62} (2011), 351--358,
\newblock doi: 10.1016/j.camwa.2011.05.016.

\bibitem{ShabirNaz2011}
M.~Shabir and M.~Naz,
\newblock On soft topological spaces,
\newblock \emph{Computers \& Mathematics with Applications} \textbf{61} (2011), 1786--1799,
\newblock doi: 10.1016/j.camwa.2011.02.006.

\bibitem{Zorlutuna2012}
\.I.~Zorlutuna, M.~Akda\u{g}, W.~K. Min, and S.~Atmaca,
\newblock Remarks on soft topological spaces,
\newblock \emph{Annals of Fuzzy Mathematics and Informatics} \textbf{3}(2) (2012), 171--185.

\bibitem{Peyghan2014}
E.~Peyghan, B.~Samadi, and A.~Tayebi,
\newblock Some results related to soft topological spaces,
\newblock \emph{Facta Universitatis, Series: Mathematics and Informatics} \textbf{29}(4) (2014), 325--336.

\bibitem{HussainAhmad2015}
S.~Hussain and B.~Ahmad,
\newblock Soft separation axioms in soft topological spaces,
\newblock \emph{Hacettepe Journal of Mathematics and Statistics} \textbf{44}(3) (2015), 559--568,
\newblock doi: 10.15672/HJMS.2015449426.

\bibitem{BayramovAras2018}
S.~Bayramov and C.~G. Aras,
\newblock A new approach to separability and compactness in soft topological spaces,
\newblock \emph{TWMS Journal of Pure and Applied Mathematics} \textbf{9}(1) (2018), 82--93.

\bibitem{RayGoldar2017}
S.~Ray and S.~Goldar,
\newblock Soft set and soft group from classical view point,
\newblock \emph{Journal of the Indian Mathematical Society} \textbf{84}(3--4) (2017), 273--286,
\newblock doi: 10.18311/jims/2017/6123.

\bibitem{ChineySamanta2019}
M.~Chiney and S.~K. Samanta,
\newblock Soft topology redefined,
\newblock \emph{The Journal of Fuzzy Mathematics} \textbf{27}(2) (2019), 459--486.

\bibitem{GoldarRay2019}
S.~Goldar and S.~Ray,
\newblock A study of soft topological axioms and soft compactness by using soft elements,
\newblock \emph{Journal of New Results in Science} \textbf{8}(2) (2019), 53--66.

\bibitem{NazmulSamanta2013}
S.~Nazmul and S.~K. Samanta,
\newblock Neighbourhood properties of soft topological spaces,
\newblock \emph{Annals of Fuzzy Mathematics and Informatics} \textbf{6}(1) (2013), 1--15.

\bibitem{Matejdes2016}
M.~Matejdes,
\newblock Soft topological space and topology on the Cartesian product,
\newblock \emph{Hacettepe Journal of Mathematics and Statistics} \textbf{45}(4) (2016), 1091--1100,
\newblock doi: 10.15672/HJMS.20164513117.

\bibitem{Matejdes2021}
M.~Matejdes,
\newblock Methodological remarks on soft topology,
\newblock \emph{Soft Computing} \textbf{25}(5) (2021), 4149--4156,
\newblock doi: 10.1007/s00500-021-05587-7.

\bibitem{TaskopruAltintas2021}
K.~Ta\c{s}k\"opr\"u and \.{I}.~Alt\i nta\c{s},
\newblock A new approach for soft topology and soft function via soft element,
\newblock \emph{Mathematical Methods in the Applied Sciences} \textbf{44}(9) (2021), 7556--7570,
\newblock doi: 10.1002/mma.6354.

\bibitem{AltintasTaskopruSelvi2021}
\.{I}.~Alt\i nta\c{s}, K.~Ta\c{s}k\"opr\"u, and B.~Selvi,
\newblock Countable and separable elementary soft topological space,
\newblock \emph{Mathematical Methods in the Applied Sciences} \textbf{44}(9) (2021), 7811--7819,
\newblock doi: 10.1002/mma.6976.

\bibitem{Azzam2022}
A.~A. Azzam, Z.~A. Ameen, T.~M. Al-shami, and M.~E. El-Shafei,
\newblock Generating soft topologies via soft set operators,
\newblock \emph{Symmetry} \textbf{14}(5) (2022), Article~914,
\newblock doi: 10.3390/sym14050914.

\bibitem{Kelly1963}
J.~C. Kelly,
\newblock Bitopological spaces,
\newblock \emph{Proceedings of the London Mathematical Society} \textbf{13} (1963), 71--89,
\newblock doi: 10.1112/plms/s3-13.1.71.

\bibitem{FletcherHoylePatty1969}
P.~Fletcher, H.~B. Hoyle III, and C.~W. Patty,
\newblock The comparison of topologies,
\newblock \emph{Duke Mathematical Journal} \textbf{36} (1969),
325--331,
\newblock doi: 10.1215/S0012-7094-69-03641-2.

\bibitem{Reilly1972}
I.~L. Reilly,
\newblock On bitopological separation properties,
\newblock \emph{Nanta Mathematica} \textbf{5} (1972), 14--25.

\bibitem{CookeReilly1975}
I.~E. Cooke and I.~L. Reilly,
\newblock On bitopological compactness,
\newblock \emph{Journal of the London Mathematical Society} (2) \textbf{9}(4) (1975), 518--522,
\newblock doi: 10.1112/jlms/s2-9.4.518.

\bibitem{SenelCagman2014}
G.~\c{S}enel and N.~\c{C}a\u{g}man,
\newblock Soft closed sets on soft bitopological space,
\newblock \emph{Journal of New Results in Science} \textbf{3}(5) (2014), 57--66.

\bibitem{Ittanagi2014}
B.~M. Ittanagi,
\newblock Soft bitopological spaces,
\newblock \emph{International Journal of Computer Applications} \textbf{107}(7) (2014), 1--4,
\newblock doi: 10.5120/18760-0038.

\bibitem{NazShabirAli2015}
M.~Naz, M.~Shabir, and M.~I. Ali,
\newblock Separation axioms in bi-soft topological spaces,
\newblock arXiv:1509.00866 (2015),
\newblock doi: 10.48550/arXiv.1509.00866.

\bibitem{AcharjeeTripathy2017}
S.~Acharjee and B.~C. Tripathy,
\newblock Some results on soft bitopology,
\newblock \emph{Boletim da Sociedade Paranaense de Matem\'atica} \textbf{35}(1) (2017), 269--279,
\newblock doi: 10.5269/bspm.v35i1.29145.

\bibitem{Nordo2022}
G.~Nordo,
\newblock Soft $N$-topological spaces,
\newblock \emph{Poincar\'e Journal of Analysis and Applications} \textbf{9}(2) (2022), 249--273,
\newblock doi: 10.46753/pjaa.2022.v09i02.009.

\bibitem{Askandar2022}
S.~Askandar, B.~S. Abdullah, and L.~Khaleel,
\newblock Soft $i$-open sets in soft bi-topological spaces,
\newblock \emph{Al-Qadisiyah Journal of Pure Science} \textbf{27}(1) (2022), 124--136,
\newblock doi: 10.29350/qjps.2022.27.1.1592.

\bibitem{ReillyLocal1972}
I.~L. Reilly,
\newblock Bitopological local compactness,
\newblock \emph{Indagationes Mathematicae (Proceedings)}
\textbf{75}(5) (1972), 407--411,
\newblock doi: 10.1016/1385-7258(72)90037-6.

\end{thebibliography}
\end{document}